\newtheorem{thm}{Theorem}
\newtheorem{lem}[thm]{Lemma}
\theoremstyle{definition}
\newtheorem{df}[thm]{Definition}
\theoremstyle{remark}
\newtheorem{exa}[thm]{Example}
\newtheorem{rem}[thm]{Remark}
\newtheorem{que}[thm]{Question}
\newcommand{\Iff}{\Leftrightarrow}\renewcommand{\iff}{\leftrightarrow}
\newcommand{\Implies}{\Rightarrow}
\newcommand{\mc}{\mathcal}
\newcommand{\restrict}{\upharpoonright}
\DeclareMathOperator{\Auto}{Aut}
\newcommand{\Aut}{\Auto(\mathcal D)}
\newcommand{\abs}[1]{|#1|}
\begin{document}
	\title{A tractable case of the Turing automorphism problem: bi-uniformly $E_0$-invariant Cantor homeomorphisms}
	\author{Bj\o rn Kjos-Hanssen\thanks{
		This work was partially supported by grants from the Simons Foundation (\#315188 and \#704836 to Bj\o rn Kjos-Hanssen) and by the
		Institute for Mathematical Sciences, National University of Singapore.
	}}
	\maketitle

	\begin{abstract}
		A function $F:2^\omega\to 2^\omega$ is an $E_0$-isomorphism if for all $x,y\in 2^\omega$, we have $xE_0y\iff f(x)E_0 f(y)$, where $xE_0y\iff(\exists a)(\forall n\ge b) x(n)=y(n)$.
		If such witnesses $a$ for $xE_0 y$ and for $f(x)E_0 f(y)$ depend on each other but not on $x$, $y$, then $F$ is called bi-uniform.
		It is shown that a homeomorphism of Cantor space which is a bi-uniform $E_0$-isomorphism can induce only the trivial automorphism of the Turing degrees.
	\end{abstract}

	\begin{center}
		\emph{
			Dedicated to the celebration of the work of Theodore A. Slaman and W. Hugh Woodin
		}
	\end{center}

	\tableofcontents
	\section{Introduction}

		Let $\mathscr D_{\mathrm{T}}$ denote the set of Turing degrees and let $\le$ denote its ordering.
		This article gives a partial answer to the following famous question.
		\begin{que}\label{rigid}
			Does there exist a nontrivial automorphism of $\mathscr D_{\mathrm{T}}$?
		\end{que}
		\begin{df}
			A bijection $\pi:\mathscr D_{\mathrm{T}}\to\mathscr D_{\mathrm{T}}$ is an
			\emph{automorphism} of $\mathscr D_{\mathrm{T}}$ if
			for all $\mathbf x, \mathbf y\in\mathscr D_{\mathrm{T}}$,
			$\mathbf x\le\mathbf y$ iff $\pi(\mathbf x)\le\pi(\mathbf y)$.
			If moreover there exists an $\mathbf x$ with $\pi(\mathbf x)\ne\mathbf x$ then $\pi$ is \emph{nontrivial}.
		\end{df}
		Question~\ref{rigid} has a long history.
		Already in 1977, Jockusch and Solovay~\cite{MR0432434} showed that
		each jump-preserving automorphism of the Turing degrees is the identity above $\mathbf 0^{(4)}$.
		Nerode and Shore~\cite{Nerode.Shore:80} showed that
		each automorphism (not necessarily jump-preserving) is equal to the identity on some cone $\{\mathbf a:\mathbf a\ge\mathbf b\}$.
		Slaman and Woodin~\cite{MR2449478,SW}
		showed that each automorphism is equal to the identity on the cone above $\mathbf 0''$
		and that $\Aut$ is countable.

		There is an obstacle to reducing the base of the cone to $\mathbf 0'$ and ultimately $\mathbf 0$:
		Turing reducibility is $\Sigma^0_3$, but not $\Pi^0_2$ or $\Sigma^0_2$ in the sense of descriptive set theory.

		In the other direction, S.~Barry Cooper~\cite{Cooper} claimed to construct a nontrivial automorphism,
		induced by a discontinuous function on $\omega^\omega$, itself induced by a function on $\omega^{<\omega}$.
		That claim was not independently verified.
		In~\cite{MR3840751} we attacked the problem by ruling out a certain simple but natural possibility: automorphisms induced by permutations of finite objects.
		We showed that no permutation of $\omega$ represents a nontrivial automorphism of the Turing degrees.
		That proof was too complicated, in a way, and did not extend from $D_T$ down to $D_m$. Here we give a more direct proof using the shift map $n\mapsto n+1$.
		Our proof here will generalize to a certain class of homeomorphisms,
		distinct from the class of such homeomorphisms that the result in~\cite{MR3840751} generalizes to.

	\section{Excluding permutations by recursion}

	\begin{lem}\label{waffle}
		Suppose $\theta:\omega\to\omega$ is a bijection such that
		$\theta^{-1}\circ S\circ\theta$ is computable, where $S$ is the successor function given by $S(n)=n+1$.
		Then $\theta$ is computable.
	\end{lem}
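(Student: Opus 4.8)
The plan is to extract $\theta$ from the computable function $g:=\theta^{-1}\circ S\circ\theta$. The crucial algebraic fact is the functional equation $\theta\circ g=S\circ\theta$, that is, $\theta(g(n))=\theta(n)+1$ for all $n\in\omega$; iterating it yields $\theta(g^{k}(n))=\theta(n)+k$ for every $k\in\omega$.

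First I would record two structural observations about $g$. Since $\theta$ and $\theta^{-1}$ are bijections and $S$ is injective, $g$ is injective; and since the range of $S$ is $\omega\setminus\{0\}$, the range of $g$ is $\omega\setminus\{n_{0}\}$, where $n_{0}:=\theta^{-1}(0)$. Thus the forward $g$-orbit $n_{0},g(n_{0}),g^{2}(n_{0}),\dots$ has no repetitions. Next I would specialize the iterated equation at $n_{0}$ to get $\theta(g^{k}(n_{0}))=k$ for all $k$; hence $k\mapsto g^{k}(n_{0})$ is a right inverse of the bijection $\theta$, so it must equal $\theta^{-1}$ and is in particular surjective. Consequently every $m\in\omega$ equals $g^{k}(n_{0})$ for exactly one $k$.

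It then follows that $\theta$ is computable: on input $m$, search for the least $k$ with $g^{k}(n_{0})=m$ (this search halts, by the previous paragraph) and output that $k$; dually, $\theta^{-1}(k)=g^{k}(n_{0})$ is plainly computable. Here the single number $n_{0}$ is used as a hard-wired constant, which is harmless since we only require the existence of an algorithm. I do not expect a genuine obstacle; the one point to notice is that although $S$, and therefore $g$, fails to be surjective, the forward $g$-orbit of the unique point $n_{0}$ missing from the range of $g$ already exhausts $\omega$, and this is precisely what lets us reconstruct $\theta$ (and $\theta^{-1}$) effectively from $g$.
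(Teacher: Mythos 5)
Your proposal is correct and is essentially the paper's own argument: the paper likewise computes $\theta^{-1}$ by the recursion $\theta^{-1}(m+1)=g(\theta^{-1}(m))$ starting from the hard-wired constant $\theta^{-1}(0)=n_0$, which is exactly your $\theta^{-1}(k)=g^{k}(n_0)$, and then $\theta$ is recovered by the standard search for the inverse of a computable bijection. Your additional observations about the range of $g$ and the surjectivity of $k\mapsto g^{k}(n_0)$ are a slightly more explicit justification of the same recursion, not a different route.
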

	\begin{proof}
		Let $k(n)=(\theta^{-1}\circ S\circ\theta)(n)$.
		Then for any $m$, $k(\theta^{-1}(m))=\theta^{-1}(S(m))$ and so we compute $\theta^{-1}$ by recursion:
		\[
			\theta^{-1}(m+1) = k(\theta^{-1}(m)).\qedhere
		\]
	\end{proof}

	We write $\sigma\preceq\tau$ if $\sigma$ is a prefix of $\tau$.

	\begin{lem}\label{1111}
		Suppose $\sigma\in 2^{<\omega}$, $g:\omega\to\omega$, and $\Phi$ is a Turing functional, satisfying
		\begin{equation}\label{rochester}
			( \forall n)(\forall \tau\succeq\sigma)(\exists\rho\succeq\tau)(\Phi^\rho(n)\downarrow \text{ and } \rho\circ g(n) = \Phi^\rho(n)).
		\end{equation}
		For any $\rho\succeq\sigma$ and $n$, if $\Phi^\rho(n)\downarrow$, then $g(n)<|\rho|$.
	\end{lem}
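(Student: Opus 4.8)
The plan is a proof by contradiction exploiting the \emph{use principle}: a halting oracle computation depends only on finitely many, position-bounded, bits of its oracle. Suppose toward a contradiction that there are $\rho\succeq\sigma$ and $n$ with $\Phi^\rho(n)\downarrow$ yet $g(n)\ge|\rho|$, and put $v=\Phi^\rho(n)$. First I would record the consequence of the use principle that will be applied twice: since the convergent computation $\Phi^\rho(n)$ queries the oracle only at positions $<|\rho|$, every extension $\rho'\succeq\rho$ still gives $\Phi^{\rho'}(n)\downarrow=v$.

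Next I would branch at position $g(n)$, which is legitimate precisely because $\rho$ says nothing about that position when $g(n)\ge|\rho|$. For each $i\in 2$ choose a string $\tau_i\succeq\rho$ of length $g(n)+1$ with $\tau_i(g(n))=i$; such strings exist exactly because $g(n)\ge|\rho|$. Since $\tau_i\succeq\rho\succeq\sigma$, hypothesis~\eqref{rochester} applied to $n$ and $\tau_i$ yields some $\rho_i\succeq\tau_i$ with $\Phi^{\rho_i}(n)\downarrow$ and $\rho_i(g(n))=\Phi^{\rho_i}(n)$.

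Finally I would collide the two computations. Because $\rho_i\succeq\tau_i\succeq\rho$, the use principle gives $\Phi^{\rho_i}(n)=v$; because $\rho_i\succeq\tau_i$ and $g(n)<|\tau_i|$, we have $\rho_i(g(n))=\tau_i(g(n))=i$. Hence $i=v$ holds for both $i=0$ and $i=1$, which is absurd, so $g(n)<|\rho|$ as claimed. The only step that is more than bookkeeping is the invocation of the use principle (equivalently, that convergence of an oracle computation is preserved along extensions of the oracle), and the hypothesis $g(n)\ge|\rho|$ is used exactly once — to guarantee that the two branching strings $\tau_0$ and $\tau_1$ actually exist.
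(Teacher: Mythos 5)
Your proof is correct and is essentially the paper's argument: both hinge on the fact that when $g(n)\ge|\rho|$ one can extend $\rho$ to fix position $g(n)$ to a value incompatible with the output $\Phi^\rho(n)$, which (via the use principle) contradicts~(\ref{rochester}). The paper simply picks the single bad branch $\tau(g(n))=1-\Phi^\rho(n)$ at once, whereas you run both branches and collide them, a cosmetic difference only.
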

	\begin{proof}
		If instead $g(n)\ge |\rho|$ then $\rho(g(n))$ is undefined. So let $\tau\succeq\rho$, $\tau(g(n))=1-\Phi^{\rho}(n)$.
		This $\tau$ violates (\ref{rochester}).
	\end{proof}
	\begin{lem}\label{rumble}
		If $g:\omega\to\omega$ is injective and $\Phi$ is a Turing functional such that
		\[
			\{B: B\circ g = \Phi^B\}
		\]
		is nonmeager, then $g$ is computable.
	\end{lem}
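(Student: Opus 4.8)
The plan is to use the nonmeagerness hypothesis to place ourselves in the situation of Lemma~\ref{1111}, and then to convert the conclusion of that lemma into an algorithm for $g$. First I would observe that $A:=\{B:B\circ g=\Phi^B\}$ is $G_\delta$, being $\bigcap_n\{B:\Phi^B(n)\downarrow\text{ and }\Phi^B(n)=B(g(n))\}$ with each of these sets open. Hence $A$ has the Baire property, so being nonmeager it is comeager in some basic clopen set $[\sigma]$. Next I would verify that $\sigma$, $g$, $\Phi$ satisfy~(\ref{rochester}): given $n$ and $\tau\succeq\sigma$, comeagerness gives $B\in A$ with $B\succeq\tau$, and then any sufficiently long $\rho$ with $\tau\preceq\rho\preceq B$ and $|\rho|>g(n)$ satisfies $\Phi^\rho(n)\downarrow=\Phi^B(n)=B(g(n))=\rho(g(n))$. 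Thus Lemma~\ref{1111} applies, and moreover the same computation shows: whenever $\rho\succeq\sigma$ and $\Phi^\rho(n)\downarrow$ we have $g(n)<|\rho|$ and $\rho(g(n))=\Phi^\rho(n)$ (extend $\rho$ to some $B\in A$ and read off the value).

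Next I would compute $g(n)$ for every $n$ with $g(n)\ge|\sigma|$, as follows. Using~(\ref{rochester}) with $\tau=\sigma$, search for $\rho_1\succeq\sigma$ with $\Phi^{\rho_1}(n)\downarrow$; by Lemma~\ref{1111}, $g(n)$ then lies in the finite window $W=\{|\sigma|,\dots,|\rho_1|-1\}$. For each $v\in W$, search for some $\rho\succeq\sigma$ with $|\rho|>v$, $\Phi^\rho(n)\downarrow$, and $\rho(v)\ne\Phi^\rho(n)$. For $v\ne g(n)$ such a $\rho$ exists: since $v\ne g(n)$ and both exceed $|\sigma|$, the clopen set $\{B\succeq\sigma:B(v)\ne B(g(n))\}$ is nonempty, hence nonmeager in $[\sigma]$, hence meets the comeager-in-$[\sigma]$ set $A$, giving $B\in A$ with $B(v)\ne B(g(n))=\Phi^B(n)$; a long enough prefix of $B$ is the desired $\rho$. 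For $v=g(n)$ no such $\rho$ exists, by the last sentence of the previous paragraph. So dovetailing these searches over the finitely many $v\in W$ eliminates exactly $W\setminus\{g(n)\}$, and I output the unique survivor.

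Finally, for the (by injectivity, at most $|\sigma|$) values of $n$ with $g(n)<|\sigma|$, I would simply build the finitely many corresponding values of $g$ into the algorithm as a lookup table. This last step is where injectivity of $g$ is indispensable: without it the method of the previous paragraph can fail, because two positions $v,v'<|\sigma|$ with $\sigma(v)=\sigma(v')$ are indistinguishable by $\Phi$, and there could then be infinitely many $n$ of this kind. I expect the only genuine obstacle to be supplying the ruling-out extensions $\rho$ in the second paragraph -- that is, the Baire-category fact that a comeager set meets every nonempty clopen set -- after which the injectivity trick reduces the uncontrollable small-value behavior to a finite table.
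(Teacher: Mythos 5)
Your proof is correct and follows essentially the same route as the paper: extract a condition $\sigma$ on which the set is comeager, invoke Lemma~\ref{1111} to trap $g(n)$ in a finite window, handle the finitely many (by injectivity) small values with a lookup table, and eliminate the remaining candidates by an effective search. The only cosmetic difference is that you refute each candidate $v\ne g(n)$ individually (via a $B\in A$ with $B(v)\ne B(g(n))$) where the paper eliminates candidates pairwise, and you make explicit the needed fact that $\rho(g(n))=\Phi^\rho(n)$ for all halting $\rho\succeq\sigma$, which the paper leaves implicit.
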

	\begin{proof}
		By assumption, it is not the case that
		\[
			(\forall \sigma)( \exists n)(\exists \tau\succeq\sigma)(\forall\rho\succeq\tau)(\Phi^\rho(n)\downarrow \to \rho\circ g(n) \ne \Phi^\rho(n)).
		\]
		So we have
		\[
			(\exists \sigma)( \forall n)(\forall \tau\succeq\sigma)(\exists\rho\succeq\tau)(\Phi^\rho(n)\downarrow \text{ and } \rho\circ g(n) = \Phi^\rho(n)).
		\]
		Pick such a $\sigma$: then $\Phi$ cannot make a mistake above $\sigma$, and we can always extend to get the right answer.

		As finite data we assume we know the values of $n$ and $g(n)$ for which $g(n)<|\sigma|$.

		We compute the value $g(n)$ as follows.

		Check the finite database of $\{(k,g(k)):g(k)<|\sigma|\}$, and output $g(n)$ if found. Otherwise we know $g(n)\ge|\sigma|$.

		By dovetailing computations, find a $\rho_0\succeq\sigma$ such that $\Phi^{\rho_0}(n)\downarrow$.
		By Lemma~\ref{1111} we have that
		$g(n)<|\rho_0|$.
		Thus, $g(n)\in I$ where $I$ is the closed interval $[|\sigma|,|\rho_0|-1]$.
		Let $a\in I, b\in I, a<b$.
		It suffices to show how to eliminate either $a$ or $b$ as a candidate for being equal to $g(n)$.

		Let $\tau\succ\sigma$ be such that $\tau(a)\ne\tau(b)$ and let $\rho\succeq\tau$ be such that $\Phi^\rho(n)\downarrow$.
		Then mark as eliminated whichever $c\in\{a,b\}$ makes $\rho(c)\ne \Phi^\rho(n)$.
		Thus we one-by-one eliminate all $a\in I$
		until only one candidate remains.
	\end{proof}

	\begin{df}
	We say that a permutation $\theta:\omega\to\omega$ \emph{induces} the automorphism $\pi$ of $\mc D_r$ if, letting $\Pi=\Pi_{\theta}$ be defined by
	$\Pi(A)=A\circ\theta$ for all $A$,
	we have $\pi([X]_r)=[\Pi(X)]_r$ for all $X$.
	\end{df}
	\begin{lem}\label{fromReferee}
		If $\theta$ is a permutation of $\omega$ and induces an automorphism $\pi$ of $\mc D_r$ then $\theta^{-1}$ induces $\pi^{-1}$.
	\end{lem}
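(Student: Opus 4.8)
The plan is to reduce the statement to the trivial identity $\Pi_\theta\circ\Pi_{\theta^{-1}}=\mathrm{id}$ on reals and then transport it through $\pi$. Recall that $\Pi_\theta(A)=A\circ\theta$, so $\Pi_\theta(\Pi_{\theta^{-1}}(A)) = (A\circ\theta^{-1})\circ\theta = A\circ(\theta^{-1}\circ\theta)=A$ for every real $A$; since $\theta$ is a permutation, $\theta^{-1}$ is a well-defined permutation and this computation is legitimate. Symmetrically $\Pi_{\theta^{-1}}\circ\Pi_\theta=\mathrm{id}$ as well, though only the one direction is needed.

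First I would fix an arbitrary real $X$ and set $Y=\Pi_{\theta^{-1}}(X)=X\circ\theta^{-1}$. By the computation above, $\Pi_\theta(Y)=X$. Next I would invoke the hypothesis that $\theta$ induces $\pi$, applied to the specific real $Y$: this gives $\pi([Y]_r)=[\Pi_\theta(Y)]_r=[X]_r$. Since $\pi$ is a bijection (part of the hypothesis that $\pi$ is an automorphism), applying $\pi^{-1}$ yields $\pi^{-1}([X]_r)=[Y]_r=[X\circ\theta^{-1}]_r=[\Pi_{\theta^{-1}}(X)]_r$. As $X$ was arbitrary, this is precisely the assertion that $\theta^{-1}$ induces $\pi^{-1}$.

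I do not expect a genuine obstacle here; this is a soft, formal argument. The only point meriting a moment's care is that the definition of "induces" is phrased as the equality of degrees $\pi([X]_r)=[\Pi_\theta(X)]_r$ holding \emph{for all reals} $X$, so one is entitled to feed it the particular witness $Y=X\circ\theta^{-1}$ rather than merely some representative of a degree — but since the hypothesis already quantifies over all reals, this is automatic. Well-definedness of $\pi$ on degrees and the existence of $\pi^{-1}$ are likewise subsumed in the hypothesis that $\pi$ is an automorphism of $\mc D_r$, so nothing beyond the displayed composition identity is required.
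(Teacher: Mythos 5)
Your proof is correct and takes essentially the same route as the paper's: both rest on the identity $\Pi_\theta\circ\Pi_{\theta^{-1}}=\mathrm{id}$ and then transport it through $\pi$. If anything, yours is slightly more direct, since plugging the witness $Y=X\circ\theta^{-1}$ straight into the inducing equation and applying $\pi^{-1}$ bypasses the paper's intermediate observations that $\Pi_\theta$ preserves and (by injectivity of $\pi$) reflects $\equiv_r$.
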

	\begin{proof}
		We must show that $\pi^{-1}([X]_r)=[\Pi_{\theta^{-1}}(X)]_r$.
		We have $\Pi_{\theta}^{-1}=\Pi_{\theta^{-1}}$, since
		\[
			\Pi_{\theta^{-1}}(\Pi_{\theta}(X)) = (X\circ\theta)\circ\theta^{-1} = X= (X\circ\theta^{-1})\circ\theta = \Pi_{\theta}(\Pi_{\theta^{-1}}(X)).
		\]
		We have the well-definedness condition
		\(
			X\equiv_r Y \Implies \Pi(X)\equiv_r \Pi(Y).
		\)
		Moreover, since $\pi$ is injective, it must be that
		\(
			X\equiv_r Y \Iff \Pi(X)\equiv_r \Pi(Y).
		\)
		Since $\Pi_{\theta}^{-1}=\Pi_{\theta^{-1}}$, we also have
		\[
			\Pi^{-1}(X)\equiv_r \Pi^{-1}(Y) \Iff X\equiv_r Y\iff
			\Pi_{\theta^{-1}}(X)\equiv_r \Pi_{\theta^{-1}}(Y),
		\]
		so that $\theta^{-1}$ also induces the automorphism $\pi^{-1}$:
		\begin{eqnarray*}
			[\Pi^{-1}(Y)]_r = [X]_r&\Iff& \Pi^{-1}(Y)\equiv_r X\Iff Y\equiv_r \Pi(X)\\
			\Iff [Y]_r = [\Pi(X)]_r &\Iff& [Y]_r=\pi([X]_r)\Iff \pi^{-1}([Y]_r) = [X]_r.\qedhere
		\end{eqnarray*}
	\end{proof}
	\begin{thm}\label{LAX}
		No permutation of the integers can induce a nontrivial automorphism of $\mc D_r$, for any reducibility $\le_r$ between $\le_1$ and $\le_T$.
	\end{thm}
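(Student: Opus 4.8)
The plan is to reduce Theorem~\ref{LAX} to the machinery already assembled, with the shift map playing the central role. Suppose $\theta$ is a permutation of $\omega$ inducing an automorphism $\pi$ of $\mc D_r$ (for some fixed reducibility $\le_r$ between $\le_1$ and $\le_T$). By Lemma~\ref{waffle}, it suffices to show that the conjugate $k=\theta^{-1}\circ S\circ\theta$ is computable; then $\theta$ itself is computable, hence $\Pi_\theta$ and $\Pi_{\theta^{-1}}$ preserve Turing degree exactly (each is computed by a fixed Turing functional applied to the oracle), so $\pi$ is the identity and we are done. The target, then, is the computability of $k$.

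To get that, I would apply Lemma~\ref{rumble} with $g=k$ (which is injective, being a conjugate of the injective map $S$). What is needed is a single Turing functional $\Phi$ such that $\{B : B\circ k = \Phi^B\}$ is nonmeager --- indeed I expect it to be comeager, or even all of $2^\omega$. The natural candidate comes from unwinding what $k$ does to a function under the $\Pi_\theta$ operation: for any $A$, $(\Pi_\theta A)\circ S = (A\circ\theta)\circ S$, and composing on the outside with $\theta^{-1}$ inside, one computes that $\Pi_\theta(A\circ S)$ and $(\Pi_\theta A)\circ k$ are related by the shift. Concretely, write $B=\Pi_\theta(A)=A\circ\theta$; then $B\circ k = B\circ\theta^{-1}\circ S\circ\theta = A\circ S\circ\theta = \Pi_\theta(A\circ S)$. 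So $B\circ k$ is the $\theta$-image of the left shift of $A$, i.e.\ of $\Pi_\theta^{-1}(B)$ left-shifted. The point is that the shift $A\mapsto A\circ S$ (drop the first bit) is computable and, being a homeomorphism onto a clopen-dense-ish image, sends $B$ to something $\le_1 B$ --- hence, after applying $\Pi_\theta$, something whose $\pi$-image relates back to $B$'s degree.

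The key step is therefore to exploit that $\Pi_\theta$ and $\Pi_{\theta^{-1}}$ represent $\pi$ and $\pi^{-1}$ (Lemma~\ref{fromReferee}) to pin down the degree of $B\circ k$ relative to $B$, and then argue that for comeagerly many $B$ this degree relation is witnessed by a \emph{fixed} Turing functional $\Phi$ with $\Phi^B = B\circ k$ outright. Since $A\circ S\le_1 A$ uniformly, we have $\Pi_\theta(A\circ S)\equiv_r \Pi_\theta(A)$ is false in general, but we do get $\deg(\Pi_\theta(A\circ S))\le \deg(\Pi_\theta(A))$ at the level of $\pi$ applied to $\deg(A\circ S)\le\deg(A)$; more usefully, $A\circ S$ is computable from $A$ by a fixed functional, so $\Pi_\theta(A\circ S) = (A\circ S)\circ\theta$ and we want this to be $\Phi^{B}$ for $B=A\circ\theta$. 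By a genericity/Baire-category argument --- the set of $B$ for which some fixed functional correctly outputs $B\circ k$ is the complement of a meager set, because the failure to be able to extend-and-correct is exactly the negated condition in the proof of Lemma~\ref{rumble} --- one shows nonmeagerness and invokes Lemma~\ref{rumble}.

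The main obstacle I anticipate is precisely producing the fixed functional $\Phi$ witnessing $B\circ k=\Phi^B$ on a nonmeager set, rather than merely a degree inequality $B\circ k\le_r B$ holding pointwise (which by itself only gives, for each $B$, \emph{some} functional, with no uniformity). The resolution should be that the automorphism's representation by the \emph{single} operator $\Pi_\theta$ (and $\Pi_{\theta^{-1}}$ for the inverse) forces uniformity: $B\circ k$ is literally $\Pi_\theta(\Pi_{\theta^{-1}}(B)\circ S)$, and both $\Pi$'s together with $S$ are honest operations, so the only non-uniformity could come from $\Pi_\theta$ not being given by a Turing functional --- but on a comeager set the relevant ``extend to correct the $n$-th bit'' game is determined in our favor, which is exactly the hypothesis feeding Lemma~\ref{rumble}. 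Once $k$ is computable, Lemma~\ref{waffle} finishes the argument and the theorem follows, with the reducibility $\le_r$ entering only through the harmless observation that $\le_1$-reductions are honest enough for all the comparisons above and that a computable $\theta$ makes $\Pi_\theta$ degree-preserving for every such $\le_r$.
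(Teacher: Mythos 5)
Your proposal is correct and follows essentially the same route as the paper: push the $\le_1$-reduction $A\circ S\le_1 A$ through $\Pi_\theta$ to get, for each $B$, a functional computing $B\circ(\theta^{-1}\circ S\circ\theta)$ from $B$, pigeonhole over the countably many functionals to obtain one whose success set is nonmeager, then apply Lemma~\ref{rumble} and Lemma~\ref{waffle}. The only (harmless) divergence is at the end, where you conclude triviality directly from the computability of $\theta$ and $\theta^{-1}$, whereas the paper routes through Lemma~\ref{fromReferee} and the symmetric inequality for $\pi^{-1}$.
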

	\begin{proof}
		Suppose $\theta:\omega\to\omega$ is a permutation (bijection) and that $\theta$ induces an automorphism of $\mc D_r$.
		Thus, letting $\Pi(A)=A\circ\theta$ and $\pi([A]_r)=[\Pi(A)]_r$, $\pi$ is well-defined and is an automorphism of $\mc D_r$.

		Recall that $S$ from Definition~\ref{waffle} is simply the successor function given by $S(n)=n+1$.
		For any $B$, let $A=B\circ\theta^{-1}\circ S$ (so $x\in A$ iff $S(x)\in B\circ\theta^{-1}$).
		We have $A\le_1 {B\circ\theta^{-1}}$ and so by assumption $A\circ\theta\le_T B\circ\theta^{-1}\circ\theta=B$.
		This gives $(B\circ\theta^{-1}\circ S)\circ\theta = \Phi^{B}$ for some Turing functional $\Phi$.
		Let $g=\theta^{-1}\circ S\circ\theta$.
		Since for each $B$ there exists such a $\Phi$, there must be some $\Phi$ such that the $G_\delta$ set
		\[
			\{B: B\circ g = \Phi^B\}
		\]
		is nonmeager.
		By Lemma~\ref{rumble}, $g$ is computable.
		By Lemma~\ref{waffle}, $\theta$ is computable.

		But this means that for any $A$, $A\circ\theta\le_1 A$, so that the map $\Pi$ is everywhere-decreasing: $\Pi(X)\le_1 X$, and in particular
		\begin{equation}\label{moderator}
			\pi([X]_r)\le [X]_r
		\end{equation} where $\le$ is the ordering of $\mc D_r$.
		By Lemma~\ref{fromReferee}, $\pi^{-1}$ is also induced by a permutation, namely $\theta^{-1}$.
		Applying (\ref{moderator}) to $\theta^{-1}$ we get
		\[
			\pi^{-1}([X]_r)\le [X]_r
		\]
		for all $X$, and so $\pi([X])_r = [X]_r$ for all $X$.
	\end{proof}

	\section{Excluding bi-uniformly $E_0$-invariant homeomorphisms}
		Making computability-theoretic uniformity assumptions is one way to rule out certain possible Turing automorphisms.
		We will instead focus on uniformity of a simpler, combinatorial kind.

		Let $[a,\infty)=\{n\in\omega: a\le n\}$.
		As usual we write $X=^* Y$ if $\{n\in\omega:X(n)\ne Y(n)\}$ is finite.
		The equivalence relation $=^*$ is also known as $E_0$.
		We refine this to
		\[
			X=^*_n Y \quad\Longleftrightarrow\quad X\restrict [n,\infty) = Y\restrict [n,\infty).
		\]
		As a relation, $=^*$ is the union $\bigcup_n =^*_n$.
		A map $F$ is $E_0$-invariant, or an $E_0$-endomorphism, if for all $X$, $Y$, if $X=^* Y$ then $F(X)=^* F(Y)$.
		In terms of the refinements $=^*_n$, this means that for all $X$, $Y$, and $a$, if $X=^*_a Y$ then there is an $b$ such that $F(X)=^*_b F(Y)$.
		If this $b$ only depends on $a$ we have a certain uniformity:
		\begin{df}\label{uniformModFinite}
			A function $F:2^\omega\to 2^\omega$ is a \emph{uniformly $E_0$-invariant} if
			for each $a$ there is a $b$ such that for all $X$, $Y$,
			if $X=^*_a Y$ then $F(X)=^*_b F(Y)$.
			If $F$ is invertible and both $F$ and $F^{-1}$ are uniformly $E_0$-invariant then $F$ is said to be a \emph{bi-uniformly $E_0$-invariant}.
		\end{df}
		Some continuous maps $F:2^\omega\to 2^\omega$ induce maps $\tilde F:{{2^\omega}/{=^*}}\to {{2^\omega}/{=^*}}$ but do not have the uniform property:
		\begin{exa}\label{partialThxToReferee}
			Let $F(A)=A\setminus\min(A)$, if $A\ne\emptyset$, and $F(A)=A$ otherwise.
			We shall show that for $a=1$ there is no $b$ as required in Definition~\ref{uniformModFinite}.
			Let $b$ be given.
			Let $Y$ be any set with $0<\min(Y)<\infty$ and $\min(Y)\ge b$, and let $X=Y\cup\{0\}$.
			Thus $X=^*_1 Y$.
			We compute that $F(X)=Y$ and $F(Y)=Y\setminus\min(Y)$.
			Suppose $F(X)=^*_b F(Y)$. Then $\min(Y)< b$, a contradiction.
			Thus $F$ is not uniformly $E_0$-invariant.

			On the other hand, $F$ is $E_0$-invariant.
			Indeed, let $d_H:2^\omega\times 2^\omega\to \omega+1$ denote the Hamming distance function,
			whereby $d_H(X,Y)=\abs{\{n:X(n)\ne Y(n)\}}$.
			We have $d_H(F(X),X)\le 1$, so that if $X=^*_a Y$
			then
			\[
				d_H(F(X),F(Y))\le d_H(F(X),X)+d_H(X,Y)+d_H(Y,F(Y))\le a+2
			\]
			and hence $F(X)=^* F(Y)$.
		\end{exa}
		\begin{lem}\label{proofFixedByReferee}
			If $F(A)=A\circ f$ for a permutation $f:\omega\to\omega$ then $F$ is uniformly $E_0$-invariant.
		\end{lem}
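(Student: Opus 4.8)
The plan is to unwind the definitions and exploit the injectivity of $f$; there is essentially no hidden difficulty. Fix $a\in\omega$. If $a=0$ then $X=^*_0 Y$ means $X=Y$, so $b=0$ witnesses the condition; assume henceforth $a\ge 1$. I claim that
\[
	b \;:=\; 1 + \max\{\, f^{-1}(j) : j < a \,\}
\]
witnesses the uniformity requirement for this $a$. First I would observe that $b$ is well-defined: since $f$ is a bijection, $f^{-1}(\{0,1,\dots,a-1\})$ is a set of exactly $a$ natural numbers, in particular a nonempty finite set, so the displayed maximum exists. Note that this choice of $b$ refers only to $a$ and the fixed permutation $f$, not to any particular $X$ or $Y$.

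Next I would verify the defining implication. Suppose $X =^*_a Y$, i.e.\ $X\restrict[a,\infty) = Y\restrict[a,\infty)$, and let $n\ge b$. By the choice of $b$, $n$ does not lie in $f^{-1}(\{0,\dots,a-1\})$, so $f(n)\notin\{0,\dots,a-1\}$, that is $f(n)\ge a$. Hence $X(f(n)) = Y(f(n))$, i.e.\ $F(X)(n) = (X\circ f)(n) = (Y\circ f)(n) = F(Y)(n)$. Since $n\ge b$ was arbitrary, $F(X)\restrict[b,\infty) = F(Y)\restrict[b,\infty)$, which is exactly $F(X) =^*_b F(Y)$. As $b$ depended only on $a$, this establishes that $F$ is uniformly $E_0$-invariant in the sense of Definition~\ref{uniformModFinite}.

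The only point that needs any care is the finiteness of $f^{-1}(\{0,\dots,a-1\})$, which is where the hypothesis on $f$ is genuinely used: injectivity of $f$ guarantees that preimages of finite sets are finite, hence admit a maximum. (Surjectivity of $f$ plays no role in this direction; it would be relevant only if one also wanted $F$ to be a bijection with uniformly $E_0$-invariant inverse.) Beyond that, the argument is a one-line unwinding of $=^*_a$ and of the composition $A\mapsto A\circ f$, so I do not anticipate any obstacle.
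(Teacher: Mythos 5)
Your proof is correct and follows essentially the same route as the paper's: the paper takes $b=\max\{f^{-1}(m):m<a\}$ and concludes $F(X)=^*_{b+1}F(Y)$, which is just your witness shifted by one. Your explicit handling of the $a=0$ case (where the maximum is over the empty set) is a small tidiness improvement over the paper's version, but the argument is the same.
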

		\begin{proof}
			Let $a\in\omega$. Let $b=\max\{f^{-1}(m): m<a\}$.
			Suppose $X=^*_a Y$, i.e., $X(m)=Y(m)$ for all $m\ge a$.
			Let $n> b$ and let $m$ be such that $n=f^{-1}(m)$.
			By definition of $b$, we have $m\ge a$, so that
			\[
				X\circ f(n) = X(f(f^{-1}(m))) = X(m) = Y(m) = Y(f(f^{-1}(m))) = Y\circ f(n),
			\]
			giving $X\circ f=^*_{b+1} Y\circ f$.
		\end{proof}

		Define the ${\star}$ operator by $f^{\star}(A)(n)=A(f(n))$.
		Again, recall the successor function $S$ from Definition~\ref{waffle}.
		\begin{lem}\label{belgian}
			Suppose $\Theta:2^\omega\to 2^\omega$ is a homeomorphism such that
			the function $\Theta^{-1}\circ S^{\star}\circ\Theta$ is computable.
			Then $\Theta$ is computable.
		\end{lem}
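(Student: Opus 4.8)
The plan is to run the recursion of Lemma~\ref{waffle} in the \emph{output} coordinate instead of the domain, using that the zeroth output bit of a Cantor homeomorphism is governed by a clopen set and is therefore decidable.

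First I would set $K=\Theta^{-1}\circ S^{\star}\circ\Theta$, which is computable by hypothesis, and cancel $\Theta^{-1}$ to record the identity $\Theta\circ K = S^{\star}\circ\Theta$; unwinding the definition $(S^{\star}B)(n)=B(n+1)$, this says
\[
  \Theta\bigl(K(A)\bigr)(n)=\Theta(A)(n+1)\qquad\text{for all }A\in 2^{\omega},\ n\in\omega .
\]
Iterating, I would prove by induction on $j$ that $\Theta(A)(j)=\Theta\bigl(K^{j}(A)\bigr)(0)$ for all $A$ and $j$: the base case is immediate, and for the step, $\Theta(A)(j+1)=\Theta(K(A))(j)=\Theta(K^{j}(K(A)))(0)=\Theta(K^{j+1}(A))(0)$, applying the displayed identity and then the inductive hypothesis to $K(A)$.

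Next I would observe that $A\mapsto\Theta(A)(0)$ is computable. The set $C=\{A:\Theta(A)(0)=0\}$ is the $\Theta$-preimage of the cylinder of sequences beginning with $0$, hence open by continuity of $\Theta$; the same applies to the complementary cylinder, so $\{A:\Theta(A)(0)=1\}=2^{\omega}\setminus C$ is clopen. A clopen subset of the compact space $2^{\omega}$ is a \emph{finite} union $\bigcup_{k\le m}\{A:\sigma_{k}\preceq A\}$ of basic cylinders, and membership in it is decided by reading the first $\max_{k}|\sigma_{k}|$ bits of the argument; hard-coding the finite list $\sigma_{0},\dots,\sigma_{m}$ yields a Turing functional computing $A\mapsto\Theta(A)(0)$.

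Finally I would assemble the pieces. Fixing a Turing functional $\Psi$ with $\Psi^{A}=K(A)$, each iterate $K^{j}$ is computable uniformly in $j$ (run $\Psi$ on $\Psi$-computed bits, $j$ levels deep; only finitely many queries ever arise, as $K$ is total). By the previous two steps, $\Theta(A)(j)=1$ iff $\sigma_{k}\preceq K^{j}(A)$ for some $k\le m$, and the right-hand side is decidable uniformly in $A$ and $j$. Hence $\Theta$ is computable. I expect the only real subtlety to be the middle step --- seeing that the zeroth output bit is given by a clopen, hence (by compactness of Cantor space) decidable, condition, and being content to hard-code the finite description of that clopen set, which is legitimate because ``$\Theta$ is computable'' only asserts the existence of a single Turing functional, with no uniformity demanded.
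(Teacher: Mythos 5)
Your proof is correct and is essentially the paper's own argument: the paper derives $\Theta\circ\Phi=S^{\star}\circ\Theta$ for the computable $\Phi=\Theta^{-1}\circ S^{\star}\circ\Theta$, notes that the zeroth output bit $\pi_0^{\star}\circ\Theta$ is ``a finite amount of information'' (your clopen/compactness step, spelled out), and recursively obtains bit $n+1$ from bit $n$ via $\Phi$ --- exactly your identity $\Theta(A)(j)=\Theta(K^{j}(A))(0)$. The only cosmetic difference is that the paper phrases the recursion as composing truth tables (see its Example on the inductive procedure) while you evaluate pointwise by iterating $K$; these are the same computation.
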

		\begin{proof}
			Let $\pi_n:\omega\to\omega$ be the constant $n$ function.
			For $\pi_n^{\star}:2^\omega\to 2^\omega$, note that $\pi_n^{\star}(A)(u)=(A\circ\pi_n)(u)=A(n)$,
			so $\pi_n^{\star}(A)\in\{0^\omega, 1^\omega\}$.
			Note that
			\begin{eqnarray*}
				(\pi_n^{\star}\circ S^{\star})(A)(u)
				&=&\pi_n^{\star}(S^{\star}(A))(u)
				=((S^{\star}(A))\circ\pi_n)(u)
				=(S^{\star}(A))(n)
			\\
				&=&A(S(n))
				=A(n+1)
				=(A\circ\pi_{n+1})(u)
				=(\pi_{n+1}^{\star}(A))(u)
			\end{eqnarray*}
			so $\pi_n^{\star}\circ S^{\star}=\pi_{n+1}^{\star}$.
			Also,
			$S\circ \pi_n = \pi_{n+1}$.
			Let
			\(
				\Phi = \Theta^{-1}\circ S^{\star}\circ\Theta.
			\)
			By assumption, $\Phi$ is computable.
			Then
			\begin{eqnarray*}
				\Theta\circ\Phi &=& S^{\star}\circ\Theta,\\
				\text{hence}\quad(\pi_n^{\star}\circ\Theta)\circ\Phi &=& \pi_{n+1}^{\star}\circ\Theta.
			\end{eqnarray*}
			Since homeomorphisms have finite use, $\pi_0^{\star}\circ\Theta$ is just a finite amount of information.
			Thus, we can recursively compute 
			$\pi_{n+1}^{\star}\circ\Theta$ this way. For a concrete case, the reader may wish to inspect Example~\ref{indProc}.
		\end{proof}
		\begin{df}
			For a real \(X\) and a string \(\sigma\) of length \(n\),
			\[
				(\sigma\searrow X)(n)  = 
				\begin{cases}
					\sigma(n) & \text{if \(n<|\sigma|\),} \\
					X(n) & \text{otherwise.} 
				\end{cases}
			\]
		\end{df}

		\begin{lem}\label{crspToRumble}
			Suppose $F:2^\omega\to 2^\omega$ is a uniformly $E_0$-invariant continuous function.
			Suppose $F(X)=\Phi^X$ is forced above $\sigma\in 2^{<\omega}$, where $\Phi$ is a Turing functional.
			Then $F$ is computable.
		\end{lem}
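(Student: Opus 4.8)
The plan is to compute $F(X)$, for an arbitrary $X\in 2^\omega$, by reducing to the cylinder $[\sigma]=\{Z\in 2^\omega:\sigma\preceq Z\}$, where $F$ is computed by $\Phi$; this substitution will corrupt only a bounded initial segment of the output, and that bounded segment will be repaired using a finite lookup table supplied by compactness.

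First I would fix, using Definition~\ref{uniformModFinite}, a number $b=b(|\sigma|)$ such that $Z=^*_{|\sigma|}W$ implies $F(Z)=^*_b F(W)$ for all $Z,W\in 2^\omega$. Given $X$, put $X'=\sigma\searrow X$. Then $X'\succeq\sigma$ and $X'$ agrees with $X$ on $[|\sigma|,\infty)$, so $X=^*_{|\sigma|}X'$ and hence $F(X)(n)=F(X')(n)$ for all $n\ge b$. Since $X'\succeq\sigma$, the hypothesis that $F(X)=\Phi^X$ is forced above $\sigma$ gives that $\Phi^{X'}$ is total with $\Phi^{X'}=F(X')$, whence $F(X)(n)=\Phi^{X'}(n)$ for every $n\ge b$. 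Because $X'$ is computed from $X$ using only the finite string $\sigma$, this produces $F(X)(n)$ for all $n\ge b$ uniformly in $X$; and the case $X\succeq\sigma$ is automatically included, since then $X'=X$.

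For the remaining finitely many bits $F(X)(n)$ with $n<b$ the substitution $X\mapsto X'$ is useless, as $F$ is unconstrained below $b$. Here I would invoke that for each fixed $n$ the map $X\mapsto F(X)(n)$ is continuous from $2^\omega$ to $\{0,1\}$; by compactness of $2^\omega$ its two fibres are finite unions of basic clopen sets, so there is a length $k(n)$ such that $F(X)(n)$ depends only on $X\restrict k(n)$, via a finite table $T_n\colon 2^{k(n)}\to\{0,1\}$. Hard-wiring the finitely many data $\sigma$, $b$, and $(k(n),T_n)_{n<b}$ into a single Turing functional then gives an algorithm computing $F(X)(n)$ from oracle $X$ for all $n$, so $F$ is computable.

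The step I expect to be the main obstacle is the appeal to ``forced above $\sigma$'': the whole argument hinges on $\Phi^{X'}$ being total and equal to $F(X')$ for \emph{every} $X'\succeq\sigma$ --- equivalently, on $F\restrict[\sigma]$ being computed by $\Phi$ on the nose --- because the lookup table only covers the finitely many $n<b$, while the $\Phi$-mechanism must handle all $n\ge b$. This is precisely the content of the forcing hypothesis; if one instead started from mere non-meagerness of $\{X:\Phi^X=F(X)\}$, one would additionally need an analogue of Lemma~\ref{1111} --- namely that $\Phi^\rho(n)\downarrow$ with $\rho\succeq\sigma$ pins $F(\cdot)(n)$ to the constant value $\Phi^\rho(n)$ throughout $[\rho]$, since otherwise continuity of $F$ produces an extension of $\rho$ along which $F(\cdot)(n)$ is constantly the wrong value, contradicting density of correct decisions together with monotonicity of $\Phi$ --- and the meager set of $X'$ on which no prefix of $X'$ makes $\Phi$ halt would still need separate handling. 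The remaining points, that $\sigma\searrow X$ is computable from $X$ with the advice $\sigma$ and that exactly $b$ low bits require hard-wiring, are routine.
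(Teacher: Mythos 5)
Your proof is correct and follows essentially the same route as the paper's: substitute $\sigma\searrow X$ for $X$ so that the forcing hypothesis yields $F(X)(n)=\Phi^{\sigma\searrow X}(n)$ for all $n\ge b$, where $b$ comes from uniform $E_0$-invariance applied to $a=|\sigma|$, and hard-wire the finitely many truth tables for $n<b$. Your explicit justification of those truth tables via continuity and compactness is a detail the paper leaves implicit, but it is the same argument.
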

		\begin{proof}
			Indeed, let $\sigma$ force $F(X)=\Phi^X$.
			Let $a=\abs{\sigma}$.
			Let $b$ be as in Definition~\ref{uniformModFinite}.
			Let the truth tables for $F$ for $n<b$ be given as a finite database.
			For $n\ge b$,
			\[
				F^X(n)=F^{\sigma\searrow X}(n)=\Phi^{\sigma\searrow X}(n).\qedhere
			\]
		\end{proof}
		\begin{thm}
			Let $\pi$ be an automorphism ot $\mc D_r$, for any reducibility $\le_r$ between $\le_1$ and $\le_T$.
			Suppose that $\pi$ is induced by a homeomorphism $\Theta$ of $2^\omega$ that is a bi-uniform $E_0$-isomorphism\footnote{
				The non-redundancy of the notion of bi-uniformly $E_0$-invariant Cantor homeomorphism has been pointed out by Salo~\cite{364844}.
			}.
			Then $\Theta$ is computable and $\pi$ is trivial.
		\end{thm}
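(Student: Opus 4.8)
The plan is to transpose the proof of Theorem~\ref{LAX} to Cantor space, with the shift operator $S^{\star}$ in the role of the successor map $S$, with Lemma~\ref{belgian} in the role of Lemma~\ref{waffle}, and with Lemma~\ref{crspToRumble} in the role of Lemma~\ref{rumble}. First I would record the homeomorphism analogue of Lemma~\ref{fromReferee}: since $\Theta$ is a bijection of $2^\omega$, $X\equiv_r Y$ iff $\Theta(X)\equiv_r\Theta(Y)$, and reading $\pi^{-1}$ off from $\pi([X]_r)=[\Theta(X)]_r$ gives $\pi^{-1}([Y]_r)=[\Theta^{-1}(Y)]_r$, so $\Theta^{-1}$ induces $\pi^{-1}$; note also that $\Theta^{-1}$ is again a homeomorphism which is a bi-uniform $E_0$-isomorphism.

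Set $\Psi=\Theta^{-1}\circ S^{\star}\circ\Theta$. For every $X$, the shift gives $S^{\star}(\Theta(X))\le_1\Theta(X)$ (via the injection $n\mapsto n+1$), so applying the order automorphism $\pi^{-1}$ and using that $\le_1$ refines $\le_r$,
\[
	[\Psi(X)]_r=\pi^{-1}\bigl([S^{\star}(\Theta(X))]_r\bigr)\le\pi^{-1}\bigl([\Theta(X)]_r\bigr)=[X]_r,
\]
whence $\Psi(X)\le_T X$. Thus $2^\omega=\bigcup_e\{X:\Psi(X)=\Phi_e^X\}$ over the Turing functionals $\Phi_e$, and since $2^\omega$ is not meager there is a single $\Phi$ for which $\{X:\Psi(X)=\Phi^X\}$ is nonmeager; it is moreover $G_\delta$, because $\Psi$ is continuous.

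Next I would check that $\Psi$ is continuous (a composition of continuous maps) and uniformly $E_0$-invariant: $\Theta$ and $\Theta^{-1}$ are so by the bi-uniformity hypothesis, $S^{\star}$ is so (if $X=^*_aY$ then $S^{\star}X=^*_{\max(a-1,0)}S^{\star}Y$), and a composition of uniformly $E_0$-invariant maps is again one, by composing the modulus functions $a\mapsto b$ of Definition~\ref{uniformModFinite}. Then, exactly as in the opening of the proof of Lemma~\ref{rumble}, non-meagerness of $\{X:\Psi(X)=\Phi^X\}$ contradicts the statement $(\forall\sigma)(\exists n)(\exists\tau\succeq\sigma)(\forall\rho\succeq\tau)(\ldots)$, so there is a $\sigma$ above which $\Phi$ never contradicts $\Psi$ and above which a correct value for $\Psi(\,\cdot\,)(n)$ can always be forced by a convergent computation; that is, $\Psi(X)=\Phi^X$ is forced above $\sigma$ in the sense required by Lemma~\ref{crspToRumble}. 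That lemma then gives that $\Psi$ is computable, Lemma~\ref{belgian} gives that $\Theta$ is computable, and finally, since $\Theta$ and $\Theta^{-1}$ are then computable homeomorphisms we get $\Theta(X)\equiv_T X$ for all $X$; the reduction bookkeeping of the last paragraph of the proof of Theorem~\ref{LAX} (using that $\Theta^{-1}$ also induces an automorphism) then yields $\pi([X]_r)=[\Theta(X)]_r=[X]_r$ for every $X$, so $\pi$ is trivial.

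The step I expect to be the main obstacle is passing from non-meager agreement with a fixed $\Phi$ to the hypothesis of Lemma~\ref{crspToRumble}, and making sure the resulting procedure for $\Psi$ really is total. This is exactly where uniform $E_0$-invariance of $\Psi$ does the work: it lets one replace the oracle $X$ by $\sigma\searrow X$ on the tail $[b,\infty)$, so that the finitely many bits of $\Psi(X)$ below $b$ come from a finite truth-table database while the remaining bits are read off from a forced convergent computation above $\sigma$. Everything else --- the descent inequality, the Baire-category count of functionals, the transfer to $\pi^{-1}$, and the recursion inside Lemma~\ref{belgian} --- is a routine transcription of the integer argument in Theorem~\ref{LAX}.
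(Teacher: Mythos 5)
Your proof is correct and follows essentially the same route as the paper's: use the automorphism to make a conjugate of the shift agree with a single Turing functional on a nonmeager $G_\delta$ set, pass to a condition $\sigma$ forcing agreement, apply Lemma~\ref{crspToRumble} and then Lemma~\ref{belgian}, and finish with the two-sided descent bookkeeping from the end of Theorem~\ref{LAX}. The only notable (and harmless) difference is that you conjugate as $\Psi=\Theta^{-1}\circ S^{\star}\circ\Theta$ via $\pi^{-1}$, which matches the hypothesis of Lemma~\ref{belgian} verbatim, whereas the paper works with $\Gamma=\Theta\circ S^{\star}\circ\Theta^{-1}$ and thus implicitly applies Lemma~\ref{belgian} to $\Theta^{-1}$; you also spell out details the paper leaves implicit (uniform $E_0$-invariance of the composite and the passage from nonmeagerness to forcing).
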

		\begin{proof}
			The overall proof strategy mirrors that for Theorem~\ref{LAX}.
			Suppose that letting $\pi([A]_r)=[\Theta^A]_r$ makes $\pi$ well-defined and makes $\pi$ an automorphism of $\mc D_r$.

			Again, let $S$ be the successor function, $S(n)=n+1$.
			For any $B$, let $x\in A$ iff $S(x)\in \Theta^{-1}(B)$.
			We have $A\le_1 \Theta^{-1}(B)$ and so by assumption $\Theta(A)\le_T \Theta(\Theta^{-1}(B))=B$.
			This gives $\Theta^A = \Phi^{B}$ for some Turing functional $\Phi$.
			Let $\Gamma=\Theta\circ S^{\star}\circ\Theta^{-1}$, which is also uniformly $E_0$-invariant.
			Since for each $B$ there exists such a $\Phi$, there must be some $\Phi$ such that the $G_\delta$ set
			\[
				\{B: \Gamma(B) = \Phi^B\}
			\]
			is nonmeager.
			By Lemma~\ref{crspToRumble}, $\Gamma$ is computable.
			By Lemma~\ref{belgian}, $\Theta$ is computable.

			But this means that for any $A$, $\Theta(A)\le_{tt} A$, and in particular
			\begin{equation}\label{moderator2}
				\pi([X]_r)\le [X]_r
			\end{equation} where $\le$ is the ordering of $\mc D_r$.
			Now by assumption, $\pi^{-1}$ is also induced by a homeomorphism, namely $\Theta^{-1}$ and so applying
			(\ref{moderator2}) to $\Theta^{-1}$ we get
			\[
				\pi^{-1}([X]_r)\le [X]_r
			\]
			for all $X$, and so $\pi([X])_r = [X]_r$ for all $X$.
		\end{proof}

		\begin{exa}[The inductive procedure in Lemma~\ref{belgian}.]\label{indProc}
			Suppose $\Phi$ is the truth table reduction given by $\Phi^A(n)=A(2n)\cdot A(2n+1)$ for all $A$ and $n$.
			Suppose we know the first truth-table for $\Theta$, in that we know that $\Theta^A(0)=A(2)\to A(3)$ for all $A$.
			Then
			\begin{eqnarray*}
				\Theta^A(1)=\Theta^{\Phi^A}(0) &=& \Phi^A(2)\to\Phi^A(3)\\
				&=&A(4)A(5)\to A(6)A(7).
			\end{eqnarray*}
			Next,
			\begin{eqnarray*}
				 \Theta^A(2) &=& \Theta^{\Phi^A}(1)\\
				 &=& \Phi^A(4)\Phi^A(5)\to \Phi^A(6)\Phi^A(7)\\
				 &=& A(8)A(9)A(10)A(11)\to A(12)A(13)A(14)A(15).
			\end{eqnarray*}
		\end{exa}

		\begin{rem}
			Woodin mentioned on June 6, 2019 that he and Slaman may have shown the following result in unpublished work from the 1990s:
			Each automorphism of $\mathcal D_a$, the degrees of arithmetical reducibility, is represented by a continuous function (outright).
			This gives some extra interest in a possible future $\mathcal D_a$ version of our results.
		\end{rem}
	\newpage
	
	\bibliographystyle{plain}
	\bibliography{answer-schweber}
\end{document}